\def\Q{\mathbb{Q}}
\def\Z{\mathbb{Z}}
\def\C{\mathbb{C}}
\def\F{\mathbb{F}}
\newcommand{\cB}{{\mathcal B}}
\newcommand{\cF}{{\mathcal F}}
\newcommand{\cG}{{\mathcal G}}
\newcommand{\cH}{{\mathcal H}}
\newcommand{\cK}{{\mathcal K}}
\newcommand{\cM}{{\mathcal M}}
\newcommand{\cO}{{\mathcal O}}
\renewcommand{\mod}{\bmod}
\newcommand{\set}[1]{\left\lbrace#1\right\rbrace }
\DeclareMathOperator{\Tr}{Tr}
\DeclareMathOperator{\Hom}{Hom}
\DeclareMathOperator{\Aut}{Aut}
\DeclareMathOperator{\ICM}{ICM}
\DeclareMathOperator{\Pic}{Pic}
\DeclareMathOperator{\End}{End}
\DeclareMathOperator{\GL}{GL}
\DeclareMathOperator{\rank}{rank}
\DeclareMathOperator{\Stab}{Stab}
\DeclareMathOperator{\Pol}{Pol}
\DeclareMathOperator{\pPol}{Pol^{1}}
\DeclareMathOperator{\Isom}{Isom}
\DeclareMathOperator{\Frob}{Frob}
\newcommand{\BassMod}[1]{{\cB}({#1})}
\newcommand{\vphi}{{\varphi}}
\DeclareMathOperator{\AV}{AV}
\newcommand{\AVord}[1]{\AV^{\text{ord}}({#1})}
\newcommand{\AVcs}[1]{\AV^{\text{cs}}({#1})}
\newcommand{\Modord}[1]{\cM^{\text{ord}}({#1})}
\newcommand{\Modcs}[1]{\cM^{\text{cs}}({#1})}
\newcommand{\Fcs}{\cF^{\text{cs}}}
\newcommand{\Ford}{\cF^{\text{ord}}}
\newcommand{\idcl}[1]{[{#1}]}
\renewcommand{\bar}{\overline}
\newtheorem{df}{Definition}[section]
\newtheorem{prop}[df]{Proposition}
\newtheorem{thm}[df]{Theorem}
\newtheorem{cor}[df]{Corollary}
\newtheorem{remark}[df]{Remark}
\newtheorem{example}[df]{Example}
\title{Computing abelian varieties over finite fields isogenous to a power}
\author{Stefano Marseglia}
\address{Matematiska institutionen, Stockholms universitet, Sweden}
\curraddr{Mathematical Institute, Utrecht University, The Netherlands}
\email{s.marseglia@uu.nl}
\begin{document}
\maketitle

\let\thefootnote\relax\footnote{
This is the accepted version of the following article:
\emph{
Stefano Marseglia;
Computing abelian varieties over finite fields isogenous to a power.
Res. Number Theory 5 (2019), no. 4, Paper No. 35, 17 pp.},
 which has been published (open access) in final form at
\url{https://doi.org/10.1007/s40993-019-0174-x}
}

\vspace{-1cm}
\begin{abstract}
In this paper we give a module-theoretic description of the isomorphism classes of abelian varieties $A$ isogenous to $B^r$, where the characteristic polynomial $g$ of Frobenius of $B$ is an ordinary square-free $q$-Weil polynomial, for a power $q$ of a prime $p$, or a square-free $p$-Weil polynomial with no real roots.
Under some extra assumptions on the polynomial $g$ we give an explicit description of all the isomorphism classes which can be computed in terms of fractional ideals of an order in a finite product of number fields.
In the ordinary case, we also give a module-theoretic description of the polarizations of $A$.
\end{abstract}


\section{Introduction}

It is well known that abelian varieties of dimension $g$ over the complex numbers can be functorially described by full lattices $L \subset \C^g$ and that such a description becomes an equivalence of categories when we only consider the lattices $L$ such that the associated torus $\C^g/L$ admits a Riemann form.
When we move to the wilder realm of positive characteristic we cannot have such a functorial description due to the existence of objects like supersingular elliptic curves whose endomorphisms form a quaternionic algebra which does not admit a $2$-dimensional representation, as pointed out by Serre.
Nevertheless, when we are working over a finite field $\F_q$, with $q$ a power of a prime $p$, we have analogous descriptions if we restrict ourselves to some subcategories  of the category of abelian varieties over finite fields.
More precisely, Deligne proved in \cite{Del69} that there is an equivalence between the category of ordinary abelian varieties over $\F_q$ and the category of finitely generated free $\Z$-modules with an endomorphism satisfying some easy-to-state axioms.
This description has been extended by Centeleghe and Stix in \cite{CentelegheStix15} for abelian varieties over the prime field $\F_p$ whose characteristic polynomial of Frobenius does not have real roots.
In the ordinary case, Howe has included in this equivalence the notions of dual variety and polarizations, see \cite{Howe95}.

In \cite{MarAbVar18} we have used such descriptions to produce algorithms to compute the isomorphism classes of abelian varieties with square-free characteristic polynomial of Frobenius and, when applicable, the polarizations and the corresponding automorphism groups.
The algorithms make use of the fact that the target category of Deligne's and Centeleghe-Stix functors is equivalent to a category of fractional ideals of a certain order in the \'etale algebra $\Q[x]/(h)$, where $h$ is the characteristic polynomial.

In the present paper we extend such a description to the case when the characteristic polynomial $h$ is a power of a square-free polynomial, say $h=g^r$.
Instead of fractional ideals we will have to consider lattices in $K^r$ with an $R$-modules structure, where $K=\Q[x]/(g)$ and $R=\Z[x,y]/(g(x),xy-q)$.
In the ordinary case we translate the notion of a polarization to this context.

When the order $R$ is Bass there is a classification of such modules, see \cite{basshy63} and \cite{LevyWiegand85}, and we can explicitly compute representatives of the isomorphism classes of the abelian varieties.

There are other categorical descriptions of the category of abelian varieties isogenous to a power of elliptic curves in terms of modules with extra-structure, see the Appendix in \cite{Lauter02}, \cite{Kani11} and \cite{JKPRSBT18}.
We do not make use of such descriptions and instead we work with Deligne's and Centeleghe-Stix equivalences because they allow us to deduce results also about powers of abelian varieties of dimension greater than $1$. 

The paper is structured as follows. 
In Section \ref{sec:orders} we recall the notion of an order and a fractional ideal, with a focus on Bass orders.
In Section \ref{sec:abvars} we describe the categorical equivalences that we are going to use in Section \ref{sec:powofbass}, where we focus on the case of abelian varieties with characteristic polynomial of the form $h=g^r$, with $g$ square-free.
These equivalences are based on the theorems of Deligne and Centeleghe-Stix cited above and the target category of the functors realizing them is well suited for computational purposes.

In Section \ref{sec:polarization} we translate the notion of a polarization into module-theoretic language.
Finally, in Section \ref{sec:examples} we apply our description and present the results of some computations.

The aim of the paper is to provide an effective algorithm to perform computations of isomorphism classes of abelian varieties.
The implementations can be found on the author's web-page.
We plan to use such algorithms to produce representatives that will be uploaded to the LMFDB \cite{lmfdb}.
Nevertheless the machinery built allows us to produce also theoretical results about the isogeny classes with characteristic polynomial of the form $g^r$. 
For example, if $R$ is a Bass order, we can prove that if $r>1$ then an abelian variety in such an isogeny class is not just isogenous but also isomorphic to a product of $r$ abelian varieties, see Corollary \ref{cor:abprod}.
We can also prove that given two abelian varieties $A$ and $B$ is such an isogeny class, there exists and integer $r$ such that $A^r$ and $B^r$ are isomorphic if and only if $A$ and $B$ have the same endomorphism ring, see Corollary \ref{cor:isom_powers}.
We get also statements about polarized abelian varieties, see Corollary \ref{cor:decomp} and Remark \ref{rmk:obs_to_dec}.
In a forthcoming paper we plan to use the machinery introduced to answer questions related to field extensions, for example, whether an abelian variety $A$ over $\F_q$ can be defined over a proper subfield of $\F_q$.

\subsection*{Acknowledgments}
The author would like to thank Jonas Bergstr\"om for helpful discussions and for sharing the data used to compute Example \ref{ex:freq}.
We are also grateful to Rachel Newton and Christophe Ritzenthaler for comments on a previous version of the paper, which is part of the author's Ph.D thesis \cite{MarPhDThesis18}.
We express our gratitude to the anonymous reviewers of Research in Number Theory for their useful comments and suggestions.

\subsection*{Conventions}
All rings considered are commutative and unital.
All morphisms between abelian varieties $A$ and $B$ over a field $k$ are also defined over $k$, unless otherwise specified.
In particular, we write $\Hom(A,B)$ for $\Hom_k(A,B)$.
Also, an abelian variety $A$ is simple if it is so over the field of definition.

\section{Orders}
\label{sec:orders}

Let $g$ be an integral square-free monic polynomial, say of degree $n$.
Let $K$ be the \'etale $\Q$-algebra $\Q[x]/(g)$.
Note that $K$ is a finite product of distinct number fields.
An \emph{order} $R$ in $K$ is a subring of $K$ whose additive group is isomorphic to $\Z^n$.
Among all orders in $K$ there exists a maximal one with respect to inclusion, which is called the \emph{maximal order} of $K$ and is denoted $\cO_K$.
An \emph{over-order} of $R$ is an order $S$ in $K$ containing $R$. Since the quotient $\cO_K/R$ is finite there are only finitely many over-orders of $R$.
A \emph{fractional ideal} of $R$ is a finitely generated sub-$R$-module of $K$ containing a non-zero-divisor.
Given two fractional $R$-ideals $I$ and $J$, we have that $I+J$, $I\cap J$,$IJ$, $(I:J)$ and $I^t$ are also fractional $R$-ideals.
Recall that the \emph{quotient ideal} $(I:J)$ and the \emph{trace dual ideal} $I^t$
are defined respectively as
\[ (I:J) = \set{ x \in K : xJ \subseteq I } \]
and
\[ I^t = \set{ x \in K : \Tr_{K/\Q}(xI)\subseteq \Z }. \]
Observe that the underlying additive subgroup of any fractional ideal $I$ is a free abelian group of rank $n$, that is, $I$ is a lattice in $K$.
Recall that if $I=\alpha_1\Z\oplus \ldots \oplus \alpha_n \Z $ then $I^t=\alpha^*_1\Z\oplus \ldots \oplus \alpha^*_n \Z$, where $\set{\alpha_i^*}_{i}$ is the dual basis characterized by the relations
$\Tr_{K/\Q}(\alpha^*_i\alpha_j)=1$ if $i=j$ and $0$ otherwise.

Given any full lattice $I$ in $K$ the set $(I:I)$ is an order.
If $I$ is a fractional $R$-ideal then $(I:I)$ will contain $R$.
This order is called the \emph{multiplicator ring} of $I$.
A fractional ideal $I$ is called \emph{invertible} if $I(S:I)=S$, where $S$ is the multiplicator ring of $I$.

An order $S$ is called \emph{Gorenstein} if every fractional ideal with multiplicator ring $S$ is invertible, or equivalently if $S^t$ is an invertible ideal, see \cite[Proposition 2.7]{buchlenstra}.
Examples of Gorenstein orders are $\cO_K$ and the monogenic order $R=\Z[x]/(f)$, see \cite[Example 2.8]{buchlenstra}.
An order $R$ is called \emph{Bass} if every over-order of $R$ is Gorenstein.
Since in this paper we will extensively use the properties of Bass orders we will list here other equivalent definitions.
\begin{prop}
\label{prop:bassorders}
   Let $R$ be an order. The following are equivalent:
   \begin{itemize}
      \item $R$ is Bass (every over-order is Gorenstein);
      \item every fractional $R$-ideal can be generated by $2$ elements;
      \item $R$ is a \emph{cyclic index} order, that is, the finite $R$-module $\cO_K/R$ is cyclic.
   \end{itemize}
\end{prop}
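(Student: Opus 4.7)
The plan is to establish the three equivalences by reducing to the local case. Each of the three conditions (being Bass, the 2-generator property, and cyclic index) is preserved and reflected under completion at the primes $\mathfrak{p}$ of $R$: being Gorenstein is a local property, an ideal is 2-generated iff it is so locally (by Nakayama), and the finite $R$-module $\cO_K/R$ is cyclic iff all its localizations are. So I would first reduce to the situation where $R$ is a one-dimensional Noetherian local ring with total ring of fractions $K$ and finite integral closure $\cO_K$.

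For $(2)\Rightarrow(1)$, any fractional ideal of an over-order $S\supseteq R$ is in particular a fractional $R$-ideal and hence 2-generated, so the 2-generator property propagates to every over-order. Then a 2-generated fractional ideal whose multiplicator ring equals the ambient order is locally principal (Nakayama), hence invertible. Thus every over-order satisfies the invertibility criterion for Gorensteinness recalled before the statement, and is in particular Gorenstein. For $(1)\Rightarrow(2)$, in the local setting the key step is to analyze the maximal ideal $\mathfrak{m}$ as an ideal of the over-order $(\mathfrak{m}:\mathfrak{m})$: Gorensteinness of both $R$ and $(\mathfrak{m}:\mathfrak{m})$, granted by the Bass hypothesis, forces via a length/duality computation that $\mathfrak{m}$ is principal as an ideal of $(\mathfrak{m}:\mathfrak{m})$, and hence 2-generated as an $R$-module. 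Iterating along the finite chain of over-orders $R\subset(\mathfrak{m}:\mathfrak{m})\subset\cdots\subset\cO_K$ yields the 2-generator property for every fractional $R$-ideal.

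The equivalence $(1)\Leftrightarrow(3)$ is again verified after localizing: cyclicity of $\cO_K/R$ in the local case means $\cO_K=R+\alpha R$ for some $\alpha$, and under this condition the over-orders are totally ordered by inclusion with each consecutive quotient of length one, and a direct computation of the trace dual shows each link is Gorenstein. Conversely, if every over-order is Gorenstein, a length count combined with the fact that each jump in the over-order chain contributes a cyclic quotient shows that $\cO_K/R$ itself is cyclic.

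The main obstacle is the local analysis underlying $(1)\Rightarrow(2)$, which rests on Bass's original length/duality arguments for Gorenstein one-dimensional local rings and is genuinely delicate. In practice I would appeal to \cite{basshy63} and \cite{LevyWiegand85} for these technical steps rather than reproving them from scratch, since the paper at hand uses this proposition as a structural tool rather than as an object of independent study.
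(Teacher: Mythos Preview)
The paper does not prove this proposition: immediately after the statement it simply refers the reader to \cite{basshy63} and \cite[Theorem~2.1]{LevyWiegand85} and moves on. Your proposal, which sketches the local reduction and the chain-of-over-orders argument before likewise deferring to these same references, therefore already goes beyond what the paper does; at the level of ``approach'' (cite the literature) the two agree.

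One correction to your sketch, for when you flesh it out: the step ``a 2-generated fractional ideal whose multiplicator ring equals the ambient order is locally principal (Nakayama), hence invertible'' is not right as stated. Nakayama only bounds the minimal number of generators by $2$, not by $1$, and there is no general mechanism by which the condition $(I:I)=S$ forces a drop from two generators to one. The standard route from the 2-generator hypothesis to Gorensteinness in the one-dimensional local case goes instead through multiplicity: if every ideal of the local ring $S$ is 2-generated then in particular $\mu(\mathfrak m_S^n)\le 2$ for all $n$, which forces the multiplicity $e(S)\le 2$, and a one-dimensional Cohen--Macaulay local ring of multiplicity at most $2$ is a hypersurface, hence Gorenstein. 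Since you already intend to cite \cite{basshy63} and \cite{LevyWiegand85} for the delicate parts, this does not change your overall plan and leaves you aligned with the paper.
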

The study of such orders started with the paper \cite{basshy63} on Gorenstein rings.
There are many sources where one can find a proof of Proposition \ref{prop:bassorders} (and other characterizations), for example \cite[Theorem 2.1]{LevyWiegand85}. 
Since every fractional ideal of a quadratic order can be generated by $2$ elements as an abelian group, they are examples of Bass orders. 

Given an order $R$ we define the \emph{ideal class monoid} as
\[ \ICM(R) = \faktor{\set{ \text{fractional $R$-ideals} }}{\simeq} \]
and the \emph{ideal class group} as
\[ \Pic(R) = \faktor{\set{ \text{invertible fractional $R$-ideals} }}{\simeq}, \]
where the operations are induced by ideal multiplication. We will denote the class of the ideal $I$ by $\idcl{I}$.
Note that $\ICM(R) \supseteq \Pic(R)$ with equality if and only if $R=\cO_K$.
In general we have that
\[ \ICM(R) \supseteq \bigsqcup_S \Pic(S), \]
where the disjoint union is taken over the over-orders of $R$, with equality if and only if $R$ is Bass.
In particular, if this is the case, once we have a complete list of over-orders of $R$, it is easy to compute all the ideal classes of $R$, using the results from \cite{klupau05}.
For more about the computation of $\ICM(R)$, even in the non-Bass case, we refer to \cite{MarICM18}.

Recall that an $R$-module $M$ is \emph{torsion-free} if the canonical map $M\to M\otimes_R K$ is injective.
\begin{df}
    For an order $R$ in $K$, we define $\BassMod{r}$ as the category of torsion-free $R$-modules $M$ such that $M\otimes K$ is a free $K$-module of rank $r$.
    The morphisms are the $R$-linear morphisms.
\end{df}

Crucial for our purpose is the fact that, when $R$ is a Bass order, the modules in $\BassMod{r}$ can be written in a canonical form in terms of over-orders of $R$ and fractional ideals.

\begin{thm}
\label{thm:decompmodules}
    Let $R$ be a Bass order and let $M$ be in $\BassMod{r}$.
    Then there are fractional $R$-ideals $I_1,\ldots, I_r$ with $(I_1:I_1)\subseteq \ldots \subseteq (I_r:I_r)$ such that 
    \[ M\simeq I_1\oplus\ldots \oplus I_r. \]    
    The isomorphism class of $M$ is uniquely determined by the chain of over-orders $(I_i:I_i)$ and the isomorphism class $\idcl{I_1\cdots I_r}$.    
\end{thm}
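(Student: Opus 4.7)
The plan is to split the theorem into an existence half---producing a decomposition $M \simeq I_1 \oplus \cdots \oplus I_r$ realizing the ascending chain $(I_1:I_1) \subseteq \cdots \subseteq (I_r:I_r)$---and a uniqueness half---showing that this chain together with the class $\idcl{I_1 \cdots I_r}$ determine $M$ up to isomorphism.

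For existence, I would invoke the structural theorem going back to \cite{basshy63} and refined in \cite{LevyWiegand85}: over a Bass order every object of $\BassMod{r}$ decomposes as a direct sum of $r$ fractional ideals. The quickest proof uses the $2$-generation property from Proposition \ref{prop:bassorders} to induct on $r$, splitting off a rank-$1$ summand at each step; alternatively one argues locally, since each localization of $R$ at a maximal ideal is a one-dimensional Gorenstein local ring whose indecomposable torsion-free modules are ideals, and then patches globally. This yields $M \simeq J_1 \oplus \cdots \oplus J_r$ for some fractional ideals $J_i$. To impose the chain condition I would use an exchange lemma: any adjacent pair $J_i, J_{i+1}$ whose multiplicator rings are not nested can be replaced by an isomorphic pair $J_i', J_{i+1}'$ with $(J_i':J_i') \subseteq (J_{i+1}':J_{i+1}')$, obtained from suitable combinations such as $J_i \cap J_{i+1}$ and $J_i + J_{i+1}$ (corrected by invertible factors so that the short exact sequence
\[
0 \to J_i \cap J_{i+1} \to J_i \oplus J_{i+1} \to J_i + J_{i+1} \to 0
\]
splits, which is where the Bass hypothesis enters). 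Applying this pairwise in bubble-sort fashion yields the chain, and a direct computation verifies that the class $\idcl{J_1 \cdots J_r}$ is preserved under each swap.

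For uniqueness, one recovers the chain intrinsically from $M$. For each over-order $S$ of $R$, the number of summands $I_i$ with $(I_i:I_i) \subseteq S$ equals the $S$-rank of a natural $S$-submodule of $M$---essentially the submodule of $S$-stable generic elements---so the chain is an invariant of $M$. With the chain fixed, the remaining ambiguity lies in $\prod_i \Pic((I_i:I_i))$, and iterated application of the exchange lemma collapses this product of Picard groups to a single class represented by $\idcl{I_1 \cdots I_r}$. The main obstacle will be this collapsing step: tracking how the exchange acts on Picard groups of distinct over-orders, and verifying that the product class is both well-defined and complete as an invariant, requires delicate bookkeeping that leans heavily on the Bass property to ensure that each summand is invertible over its multiplicator ring.
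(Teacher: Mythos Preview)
The paper does not actually prove Theorem~\ref{thm:decompmodules}. Immediately after the statement it writes: ``This result was first proved in \cite[Theorem 1.7]{Ba62} and then proved with a different method in \cite[Theorem 8]{BF65}. It was generalized to Bass rings in \cite[Theorem 7.1]{LevyWiegand85}.'' That is the entirety of the paper's treatment---it is a cited result, not one reproved here. So there is no in-paper argument to compare your proposal against.

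That said, your sketch is broadly in line with the cited sources. The exchange-lemma mechanism you describe (replacing an incomparable pair $J_i, J_{i+1}$ by a comparable pair via the short exact sequence $0 \to J_i \cap J_{i+1} \to J_i \oplus J_{i+1} \to J_i + J_{i+1} \to 0$, using invertibility over multiplicator rings to split it) is essentially the Borevi\v{c}--Faddeev argument; the paper even exhibits a concrete instance of this exchange in the Example following the theorem, producing an explicit matrix realizing $I_1 \oplus I_2 \simeq S_1 \oplus I_1 I_2$. Your Steinitz-style collapse of $\prod_i \Pic((I_i:I_i))$ down to the single class $\idcl{I_1 \cdots I_r}$ is likewise the standard route. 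One point where your outline is vague: recovering the chain of over-orders as an intrinsic invariant of $M$ is cleanest done locally (at each maximal ideal of a Bass order the over-orders are linearly ordered, so the local multiplicities are determined, and one reassembles globally), rather than via the ``$S$-rank of a natural $S$-submodule'' you gesture at---you would need to make that submodule precise to turn this into an argument. But since the paper simply defers to the literature, your proposal already goes further than required.
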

This result was first proved in \cite[Theorem 1.7]{Ba62} for Noetherian integral domains with finite integral closure such that every ideal can be generated by $2$ elements. 
Later it was reproved with a different method in \cite[Theorem 8]{BF65} for an order in a commutative, separable, semisimple extension of the quotient field of a Dedekind domain.
In \cite[Theorem 7.1]{LevyWiegand85} the results was generalized to Bass rings, that is, commutative rings without nilpotents with finite integral closure such that every ideal can be generated by $2$ elements.

Using the same notation as in Theorem \ref{thm:decompmodules}, we see that $M$ can be written in a canonical form
\[ M \simeq S_1 \oplus \ldots \oplus S_{r-1} \oplus I, \]
with $S_1\subseteq \ldots \subseteq S_{r-1} \subseteq (I:I)$ where $S_i=(I_i:I_i)$ and $\idcl{I}=\idcl{I_1\cdots I_r}$.
Moreover, the chain of over-orders of $R$ together with $\idcl{I}$ uniquely determines $M$ up to isomorphism.
In particular, if we know all the over-orders of $R$ and their Picard groups we can easily compute representatives for all the isomorphism classes of modules in $\BassMod{r}$, for every $r$.
\begin{prop}
\label{prop:homs}
    Let $M = I_1 \oplus \ldots \oplus I_r \in \BassMod{r}$ and $N = J_1 \oplus \ldots \oplus J_s \in \BassMod{r}$.
    Then
    \[ \Hom_R(M,N) = \set{ A \in \cM_{s\times r}( K ) : A_{j,i} \in ( J_j : I_i ) }. \]    
\end{prop}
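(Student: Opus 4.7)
The plan is to reduce to the rank-one case and then identify $\Hom_R(I,J)$ with $(J:I)$ for any two fractional $R$-ideals in $K$.

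First I would use that $\Hom_R(-,-)$ turns direct sums in the first variable into products and preserves products in the second, giving a canonical identification
\[ \Hom_R(M, N) \;\cong\; \prod_{i,j} \Hom_R(I_i, J_j), \]
which I would organize as an $s \times r$ matrix with $(j,i)$-entry in $\Hom_R(I_i, J_j)$. The matrix acts on a column vector in the obvious way, and this recovers the ``matrix'' description in the statement. It therefore suffices to prove that $\Hom_R(I_i, J_j) = (J_j:I_i)$, under the correspondence $\alpha \leftrightarrow (x \mapsto \alpha x)$.

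One inclusion is immediate: if $\alpha \in (J_j:I_i)$, then multiplication by $\alpha$ is $R$-linear (since $K$ is commutative) and lands in $J_j$ by the definition of the quotient ideal. For the converse, given $\varphi \in \Hom_R(I_i, J_j)$, I would extend $\varphi$ by tensoring with $K$ over $R$. Since $I_i$ and $J_j$ are fractional $R$-ideals, they are torsion-free of $K$-rank one, and the canonical maps produce identifications $I_i \otimes_R K \cong K \cong J_j \otimes_R K$ compatible with the inclusions $I_i, J_j \hookrightarrow K$. The induced $K$-linear map $K \to K$ is then multiplication by a unique $\alpha \in K$, and chasing the obvious commutative square gives $\varphi(x) = \alpha x$ for all $x \in I_i$. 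The condition $\varphi(I_i) \subseteq J_j$ then translates to $\alpha \in (J_j : I_i)$.

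I expect no serious obstacle; the only delicate point is the identification $I_i \otimes_R K \cong K$, a standard consequence of $I_i$ being a finitely generated $R$-submodule of $K$ containing a non-zero-divisor (injectivity from torsion-freeness, surjectivity from the non-zero-divisor). The conceptual takeaway is that a rank-one torsion-free $R$-module already sits inside $K$, so any $R$-linear map between two such modules is forced to be multiplication by a fixed element of $K$ after extending scalars.
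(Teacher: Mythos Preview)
Your proposal is correct and follows essentially the same route as the paper: reduce to the rank-one summands via the additivity of $\Hom_R(-,-)$, then identify $\Hom_R(I_i,J_j)$ with $(J_j:I_i)$ by tensoring with $K$ (equivalently with $\Q$) so that $\varphi$ becomes a $K$-linear endomorphism of $K$, hence multiplication by the image $\alpha$ of $1_K$. The paper states this in a single sentence, while you spell out the identification $I_i\otimes_R K\cong K$ more carefully, but the argument is the same.
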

\begin{proof}
   The statement follows from the fact that $\Hom_R(I_i,J_j)=(J_j:I_i)$ since every $R$-linear morphism $\vphi:I_i\to J_j$ is a multiplication by $\alpha\in K$, where $\alpha$ is the image of $1_K$ under the induced $K$-linear endomorphism $\vphi\otimes \Q$ of $K$.
\end{proof}

In particular, for $M=S_1\oplus \ldots \oplus S_{r-1}\oplus I$ as above
we have
    \[ \End_R(M) = 
    \begin{pmatrix}
    S_1 	&  (S_1:S_2)		& \ldots		& (S_1:S_{r-1}) 	& (S_1:I) \\
    S_2 	& S_2 	   	 	& \ldots 	& (S_2:S_{r-1}) 	& (S_2:I) \\
    \vdots 	& \vdots   	& \ddots 	& \vdots  		& \vdots \\
    S_{r-1} 	& S_{r-1}	& \ldots 	& S_{r-1} 		& (S_{r-1}:I) \\
    I 	& I				& \ldots 	& I 			& (I:I)
    \end{pmatrix}
    \]
and 
\[\Aut_R(M)=\set{ A \in \End_R(M) \cap \GL_r(K) : A^{-1} \in \End_R(M) }.\]

If $R$ is a Bass order and $M$ and $N$ are two modules in $\BassMod{r}$, it is easy  using Theorem \ref{thm:decompmodules} to check whether they are isomorphic. If this is the case, it is possible to explicitly construct a matrix $A$ realizing the isomorphism, as the next example shows. 
\begin{example}[{\cite[Lemma 8]{BF65}}]
   Let $R$ be a Bass order and let $I_1$ and $I_2$ be fractional $R$-ideals with multiplicator rings $S_1$ and $S_2$, respectively, with $S_1 \subseteq S_2$.
   Then by the classification given in Theorem \ref{thm:decompmodules} we have an $R$-linear isomorphism
   \[ I_1\oplus I_2 \simeq S_1 \oplus (I_1I_2). \]
   We want to exhibit a matrix $A$ realizing the isomorphism.
   Since $I_1$ is invertible in $S_1$, there are elements $c_1$ and $c_2$ in $K^\times$ such that $c_1I_1+c_2I_2=S_1$, see \cite[Algorithm 1.3.14]{cohenadv00}.
   Thus there are $a_1 \in c_1I_1$ and $a_2 \in c_2I_2$ such that $1=a_1+a_2$.
   We claim that the matrix
   \[ A=
      \begin{pmatrix}
         c_1 & -c_2 \\[8pt]
         \dfrac{a_2}{c_2} & \dfrac{a_1}{c_1}
      \end{pmatrix}
   \]
   satisfies $A(I_1\oplus I_2) = S_1 \oplus I_1I_2$ (where the action is on column vectors).
   Indeed given $i_1\in I_1$ and $i_2\in I_2$ we have
   \[ A  
   	  \begin{pmatrix}
         i_1 \\[3pt]
         i_2
      \end{pmatrix} = 
      \begin{pmatrix}
         c_1i_1 - c_2i_2 \\[8pt]
         \dfrac{a_2}{c_2}i_1 + \dfrac{a_1}{c_1}i_2
      \end{pmatrix}. \]
    Observe that
    \[ c_1i_1 - c_2i_2\in c_1I_1+c_2I_2=S_1 \]
    and
    \[ \dfrac{a_2}{c_2}i_1 + \dfrac{a_1}{c_1}i_2 \in I_1I_2, \]
    as required.
    Furthermore, note that $\det(A)=1$ and that given $s_1\in S_1$ and $i_1i_2\in I_1I_2$ we have
    \[ A^{-1}  
   	  \begin{pmatrix}
         s_1 \\[3pt]
         i_1i_2
      \end{pmatrix} = 
      \begin{pmatrix}
        \dfrac{a_1}{c_1} & c_2\\[8pt]
        -\dfrac{a_2}{c_2} & c_1       
      \end{pmatrix}
      \begin{pmatrix}
         s_1 \\[3pt]
         i_1i_2
      \end{pmatrix} =
      \begin{pmatrix}
         \dfrac{a_1}{c_1}s_1 + c_2i_2i_1\\[8pt]
         -\dfrac{a_2}{c_2}s_1 + c_1i_1i_2
      \end{pmatrix}. \]
	Since $a_1/c_1\in I_1$, $a_2/c_2\in I_2$, $c_1i_1,c_2i_2 \in S_1$ and $I_1I_2$ is additively generated by elements of the form $i_1i_2$, we conclude that $A^{-1}(S_1\oplus I_1I_2)\subset I_1\oplus I_2$.
\end{example}


\section{The category of abelian varieties over a finite field}
\label{sec:abvars}

Let $q$ be a power of a prime number $p$ and let $\AV(q)$ be the category of abelian varieties defined over $\F_q$. 
For $A$ in $\AV(q)$ consider the induced action of the Frobenius endomorphism on the $l$-adic Tate modules $T_lA$, for any prime $l\neq p$, and let $h_A$ be the corresponding characteristic polynomial.
Then $h_A$ is a \emph{$q$-Weil polynomial}, that is, a monic polynomial in $\Z[x]$ of even degree with roots of complex absolute value $\sqrt{q}$.
In particular $h_A$ has degree $2\dim(A)$ and uniquely determines the \emph{isogeny class} of $A$, in the sense that an abelian variety $B$ is isogenous to $A$ if and only if $h_A = h_B$.

By the Poincar\'e Decomposition Theorem we know $A$ is isogenous to a product
\[ A\sim B_1^{e_1}\times \ldots \times B_r^{e_r},\]
where $e_i$ are positive integers and the $B_i$'s are simple pairwise non-isogenous abelian varieties. It follows that
\[h_A = h_{B_1}^{e_1}\cdots h_{B_r}^{e_r}. \]
Recall that for a simple abelian variety $B$ in $\AV(q)$ the polynomial $h_B$ is a power of an irreducible polynomial, say $m^a$, and the exponent $a$ is uniquely determined by the $p$-adic factorization of $m$, see \cite[Theorem 8]{MilWat71}.

Using this recipe, we can list all \emph{characteristic polynomials} $h$ of the Frobenius of abelian varieties over a finite field $\F_q$ of a given dimension $g$, for example see \cite{Hal10} for $g=3$ and \cite{HalSin12} for $g=4$. By Honda-Tate theory, see \cite{Tate66} and \cite{Honda68}, this corresponds to describing all isogeny classes of abelian varieties in $\AV(q)$ of a given dimension $g$.
For such a polynomial $h$, denote by $\AV(h)$ the full subcategory of $\AV(q)$ whose objects are the abelian varieties in the isogeny class determined by $h$.

We will restrict our attention to two subcategories of $\AV(q)$. Recall that an abelian variety $A$ over $\F_q$ is called \emph{ordinary} if exactly half of the roots of $h_A$ over $\bar{\Q}_p$ are $p$-adic units. There are many other characterizations of ordinary abelian varieties. For example see \cite[Section 2]{Del69}. We will denote the full subcategory of $\AV(q)$ consisting of ordinary abelian varieties by $\AVord{q}$.
We will also consider the subcategory $\AVcs{p}$ of abelian varieties $A$ over the prime field $\F_p$ such that $h_A$ has no real roots, that is, $h_A(\sqrt{p}) \neq 0$.
We will give functorial descriptions of $\AVord{q}$ and $\AVcs{p}$ in terms of $\Z$-lattices with extra structure.
More precisely, consider the following categories: 
\begin{itemize}
   \item the category $\Modord{q}$ consisting of pairs $(T,F)$ where $T$ is a free finitely generated $\Z$-module and $F$ is a $\Z$-linear endomorphism of $T$ such that
   \begin{itemize}
      \item the action of $F\otimes \Q$ on $T\otimes_\Z \Q$ is semisimple;
      \item the eigenvalues of $F\otimes \Q$ have complex absolute value $\sqrt{q}$;
      \item half of the roots of the characteristic polynomial of $F\otimes \Q$ over $\bar{\Q}_p$ are units;
      \item there exists an endomorphism $V$ of $T$ such that $FV=q$;
   \end{itemize}
   \item the category $\Modcs{p}$ consisting of pairs $(T,F)$ where $T$ is a free-finitely generated $\Z$-module and $F$ is a $\Z$-linear endomorphism of $T$ such that
   \begin{itemize}
      \item the action of $F\otimes \Q$ on $T\otimes_\Z \Q$ is semisimple;
      \item the eigenvalues of $F\otimes \Q$ have complex absolute value $\sqrt{p}$;
      \item the characteristic polynomial of $F\otimes \Q$ has no real roots;
      \item there exists an endomorphism $V$ of $T$ such that $FV=p$.
   \end{itemize}
\end{itemize}
In both categories, a morphism $(T,F) \to (T',F')$ is a $\Z$-linear morphism $\vphi:T\to T'$ inducing a commutative diagram
\[ \xymatrix{
  T \ar[d]_F \ar[r]^\vphi & T'\ar[d]^{F'}\\
  T \ar[r]^\vphi          & T'} \]

The main tools to understand the categories $\AVord{q}$ and $\AVcs{p}$ are given in the following theorem. 
\begin{thm}
\label{thm:equiv}
   There are equivalences of categories
   \[ \Ford : \AVord{q} \to \Modord{q} \]
   and
   \[ \Fcs : \AVcs{p} \to \Modcs{p}. \]
   If $A\mapsto (T,F)$, then $\rank_\Z T = 2\dim A$ and the Frobenius endomorphism $\Frob_A$ is sent to $F$, so in particular they have the same characteristic polynomial.
\end{thm}
\begin{proof}
   For the ordinary case over $\F_q$ see \cite[Section 7]{Del69}.
   For the case with no-real roots over $\F_p$ we use the covariant version \cite[1.7]{CentelegheStix15} of the equivalence given in \cite[Theorem 1]{CentelegheStix15}.
\end{proof}
\begin{remark}
We remark that the functors of Theorem \ref{thm:equiv} depend on some choices, which can be made in a way that $\Fcs$ extends $\Ford$ on $\AVord{p}$, see \cite[7.4]{CentelegheStix15}.
\end{remark}


\section{Abelian varieties isogenous to a power}
\label{sec:powofbass}

Let $h$ be a characteristic polynomial of an abelian variety in $\AVord{q}$ or $\AVcs{p}$.
Assume moreover that $h=g^r$ for some square-free polynomial $g$ in $\Z[x]$.
Put $K=\Q[x]/(g)$ and $\alpha=x \mod (g)$.
Denote with $R$ the order $\Z[\alpha,q/\alpha]$ in $K$ (with $q=p$ if we are in $\AVcs{p}$).
Observe that the order $R$ is Gorenstein, see \cite[Theorem 11]{CentelegheStix15}.

\begin{thm}
\label{thm:eqideals}
\begin{enumerate}[(a)]
   \item \label{thm:eqideals:a} If $\AV(h)\subset \AVord{q}$ or $\AV(h)\subset \AVcs{p}$ then there is an equivalence of categories $\cF:\AV(h) \to \BassMod{r}$.
   \item \label{thm:eqideals:b} If $R$ is a Bass order, $\cF$ induces a bijection between
   \[ \faktor{\AV(h)}{\simeq} \]
   and the set of pairs
   \[ (S_1\subseteq S_2 \subseteq \ldots \subseteq S_r , \idcl{I} ), \]
   where each $S_i$ is an over-order of $R$ and $\idcl{I}$ denotes the isomorphism class of a fractional ideal with $(I:I)=S_r$.
\end{enumerate}
\end{thm}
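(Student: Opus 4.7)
The plan is to construct $\cF$ by restricting the equivalence $\Ford$ (resp.\ the anti-equivalence $\Fcs$) to $\AV(h)$, show that its essential image is exactly $\BassMod{r}$, and then derive part (b) from the canonical-form classification in Theorem~\ref{thm:decompmodules}.

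For part (a), start with $A \in \AV(h)$ and set $(T,F) := \Ford(A)$ (resp.\ $\Fcs(A)$). Since $h = g^r$ with $g$ square-free and $F\otimes\Q$ is semisimple with characteristic polynomial $h$, the minimal polynomial of $F\otimes\Q$ on $T\otimes_\Z\Q$ is exactly $g$, so $\Z[F]$ acts on $T$ through $\Z[x]/(g) = \Z[\alpha]$. The axiom furnishing $V\in\End_\Z(T)$ with $FV = q$, combined with the fact that $\alpha$ is a unit in $K$ (as $g(0)\neq 0$ by the Weil bound), forces $V$ to act as multiplication by $q/\alpha$. Thus $T$ carries a canonical $R = \Z[\alpha, q/\alpha]$-module structure. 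A dimension count gives $T\otimes_R K = T\otimes_\Z\Q \simeq K^r$, and the injection $T\hookrightarrow T\otimes_\Z\Q$ yields $T \in \BassMod{r}$.

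Conversely, given $M\in\BassMod{r}$, define $F$ and $V$ as multiplication by $\alpha$ and $q/\alpha$. The axioms of $\Modord{q}$ (resp.\ $\Modcs{p}$) are verified by transferring them from the action on $K^r \simeq M\otimes_\Z\Q$: semisimplicity follows from $g$ being square-free; the characteristic polynomial of multiplication by $\alpha$ on $K^r$ equals $g^r = h$, so the Weil absolute-value bound, the proportion of $p$-adic unit roots, and the absence of real roots are inherited directly from $h$; and $FV = q$ holds tautologically. For morphisms, any $\Z$-linear map $\varphi$ commuting with $F$ commutes with $V = qF^{-1}$ after tensoring with $\Q$, hence commutes with $V$ on $T$ itself (since $T$ is $\Z$-torsion-free), and is therefore $R$-linear. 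This establishes that $\cF$ is fully faithful and essentially surjective, proving (a).

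For part (b), assuming $R$ is Bass, invoke the canonical form from Theorem~\ref{thm:decompmodules}: every $M \in \BassMod{r}$ is isomorphic to $S_1 \oplus \ldots \oplus S_{r-1} \oplus I$ with $S_1\subseteq\ldots\subseteq S_{r-1}\subseteq S_r = (I:I)$, and the chain together with $\idcl{I}$ is a complete invariant. Composing with $\cF$ from part (a) yields the announced bijection between $\AV(h)/{\simeq}$ and pairs $(S_1\subseteq\ldots\subseteq S_r,\,\idcl{I})$. The main obstacle will be part (a): pinning down the $R$-module structure canonically attached to $(T,F)$, which requires using semisimplicity to force the minimal polynomial of $F$ to be $g$ and checking that the endomorphism $V$ guaranteed by the axioms necessarily acts as $q/\alpha$. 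Once the $R$-action is identified, the rank count, torsion-freeness, and compatibility with morphisms are routine, and (b) is an immediate consequence of Theorem~\ref{thm:decompmodules}.
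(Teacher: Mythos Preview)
Your argument is correct and follows essentially the same route as the paper: endow $(T,F)$ with its natural $R$-module structure via $\alpha\mapsto F$, check that this lands in $\BassMod{r}$, verify full faithfulness and essential surjectivity, and then read off part~(b) from Theorem~\ref{thm:decompmodules}. The one place to tighten is the phrase ``dimension count'' for $T\otimes_\Z\Q\simeq K^r$: when $g$ is reducible $K$ is only a product of fields, so a raw $\Q$-dimension count does not suffice, and the paper (as your argument does implicitly, via semisimplicity and the characteristic polynomial $h=\prod_i g_i^{\,r}$) instead checks that each isotypic component for an irreducible factor $g_i$ has $K_i$-dimension exactly~$r$.
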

\begin{proof}

   Denote by $\cM(h)$ the image of $\AV(h)$ via $\Ford$ (or $\Fcs$).
   We will define an equivalence $\cG:\cM(h) \to \BassMod{r}$.
   Take $A$ in $\AV(h)$ and let $(T,F)$ be the image of $A$ in $\cM(h)$ via $\Ford$ (or $\Fcs$).
   The minimal polynomial of the $\Q$-linear endomorphism $F\otimes \Q$ of $T\otimes \Q$ is $g$.
   So by definition of $\Modord{q}$ (or $\Modcs{p}$) we have that $F$ and $V$ induce on $T$ an $R$-module structure via the isomorphism $R\simeq \Z[F,V]$ given by $\alpha\mapsto F$.
   Denote this $R$-module by $M$ and put $\cG((T,F))=M$.
   Observe that the action of $F$ on $T$ is faithful, since it becomes multiplication by $q$ (or by $p$) after composing with $V$, and hence $M$ is torsion-free.
   Let's prove that $M\otimes_R K$ is a free $K$-module of rank $r$.      
   Since $g$ is square-free, it is a product of distinct irreducible polynomials, say $g=g_1\cdots g_s$.
   In particular, $K$ is isomorphic to the product of number fields $\prod_i K_i$, where $K_i=\Q[x]/(g_i)$. Let $e_i$ be the image in $K$ of the multiplicative unit of $K_i$ under this isomorphism, so that $1_K=e_1+\ldots +e_s$ and $Ke_i\simeq K_i$ for each $i$. Hence
   \[ M \otimes_R K = M \otimes_R \left( \bigoplus_{i=1}^s Ke_i\right) \simeq \bigoplus_{i=1}^s \left(M\otimes_R Ke_i \right). \]
   Since the action of $F\otimes \Q$ is semisimple, there is a direct sum decomposition $T\otimes_\Z \Q = W_1\oplus \ldots \oplus W_s$ such that the action of $F\otimes \Q$ on each $W_i$ is simple.
   This means that, possibly after renumbering, we can assume that the minimal polynomial of $F \otimes\Q|_{W_i}$ is $g_i$ and so 
   $\dim_\Q W_i = r \deg(g_i)$.
   In particular the action of $F\otimes \Q$ on $W_i$ is the same as the action of $\alpha$ on $M\otimes_R Ke_i$.
   Since $\deg(g_i)= \dim_\Q Ke_i$ it follows that $\dim_{Ke_i}(M\otimes_R Ke_i) =r $ and hence, by taking the direct sum over $i$, we obtain an isomorphism
   \[ M\otimes_R K \simeq K^r. \]
   Therefore $M$ is in $\BassMod{r}$.
   It is clear by construction that $\cG$ is a fully faithful and essentially surjective functor.
   Define $\cF$ as the composition of the equivalences $\Ford$ (or $\Fcs$) and $\cG$. In particular $\cF$ is an equivalence as well and we have concluded the proof of part~$\ref{thm:eqideals:a}$.
   Part $\ref{thm:eqideals:b}$ now follows directly from Theorem \ref{thm:decompmodules}.
\end{proof}
\begin{remark}
	The equivalence $\cF$ of part \ref{thm:eqideals:a}  of Theorem \ref{thm:eqideals} is compatible with products.
	More precisely, if we denote $\cF_i$ the equivalence $\AV(g^i)\to \BassMod{i}$ then we pick $A$ and $B$ respectively in $\AV(g^m)$ and $\AV(g^n)$ then we have a canonical isomorphism 
	\[  \cF_{m+n}(A\times B)\simeq \cF_{m}(A)\oplus\cF_{n}(B) \in \BassMod{m+n}.\]
	We will denote all these functors with $\cF$.
\end{remark}
\begin{cor}
\label{cor:abprod}
   Assume that $R$ is a Bass order. Then every abelian variety $A$ in $\AV(h)$ is isomorphic to
   \[ B_1 \times \ldots \times B_r, \]
   for some abelian varieties $B_i$ in $\AV(g)$.
\end{cor}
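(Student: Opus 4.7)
The plan is to transport the statement through the (anti-)equivalence $\cF$ of Theorem~\ref{thm:eqideals}, decompose the resulting module via Theorem~\ref{thm:decompmodules}, and then reinterpret each summand as an abelian variety in $\AV(g)$.

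Concretely, set $M := \cF(A) \in \BassMod{r}$. Since $R$ is Bass, Theorem~\ref{thm:decompmodules} produces fractional $R$-ideals $I_1, \ldots, I_r$ with
\[ M \simeq I_1 \oplus \cdots \oplus I_r. \]
Each $I_j$ is a torsion-free $R$-module with $I_j \otimes_R K \simeq K$, so $I_j \in \BassMod{1}$. Noting that $g$ inherits from $h = g^r$ both the ordinariness and the no-real-roots conditions (the two polynomials have the same set of roots), Theorem~\ref{thm:eqideals}\ref{thm:eqideals:a} applied with $g$ in place of $h$ and $r = 1$ yields an essentially surjective (anti-)equivalence $\AV(g) \to \BassMod{1}$. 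Hence for each $j$ there exists $B_j \in \AV(g)$ with $\cF(B_j) \simeq I_j$.

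To reassemble the $B_j$ into $A$, I would invoke the fact that $\cF$ preserves finite biproducts. Biproducts in $\AV$ are given by products and in $\BassMod{r}$ by direct sums, and any additive (anti-)equivalence of additive categories preserves them; additivity of $\cF$ follows from the proof of Theorem~\ref{thm:eqideals}, where the module-functor $\cG$ leaves the underlying $\Z$-module and its $\Z$-linear morphisms untouched, merely equipping them with the $R$-action coming from $F$ and $V$. Consequently
\[ \cF(B_1 \times \cdots \times B_r) \simeq \cF(B_1) \oplus \cdots \oplus \cF(B_r) \simeq I_1 \oplus \cdots \oplus I_r \simeq \cF(A), \]
and full faithfulness of $\cF$ delivers $A \simeq B_1 \times \cdots \times B_r$.

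The only step requiring any real care is this biproduct-preservation property; everything else is an immediate combination of the two cited theorems. Since the additivity of $\cF$ is transparent from its construction, I expect no genuine obstacle, only bookkeeping to handle the anti-equivalence case uniformly with the equivalence case (biproducts are self-dual as categorical constructions, so the anti-equivalence still sends products to direct sums).
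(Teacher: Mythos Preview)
Your proposal is correct and follows essentially the same route as the paper: apply $\cF$, decompose via Theorem~\ref{thm:decompmodules}, and pull each ideal summand back to an abelian variety in $\AV(g)$ using Theorem~\ref{thm:eqideals} with $r=1$. The paper's proof is terser and leaves the biproduct-preservation step implicit, whereas you spell it out; your added care there is justified but not a different argument.
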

\begin{proof}
   Put $M=\cF(A)$ by Theorem \ref{thm:eqideals}. By Theorem \ref{thm:decompmodules} we have that there are fractional ideals $I_1,\ldots,I_r$ such that 
   \[ M \simeq I_1\oplus\ldots \oplus I_r. \]
   Again by Theorem \ref{thm:eqideals}, we get that there are abelian varieties $B_i$ in $\AV(g)$ such that $B_i=\cF(I_i)$ for each $i=1,\ldots,r$.
\end{proof}
\begin{remark}
   In Corollary \ref{cor:abprod}, the abelian varieties $B_i$ are simple if and only if $g$ is irreducible.
   This follows from \cite[Theorem 3.3]{Howe95} for the ordinary case over $\F_q$ and from \cite[Theorem 8]{MilWat71} for characteristic polynomials over $\F_p$ with no real roots.
\end{remark}

\begin{cor}
   Let $A$ be in $\AV(h^r)$. If $r>1$ then $\End(A)$ is not commutative.
\end{cor}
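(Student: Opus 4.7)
The plan is to apply the (anti-)equivalence $\cF$ of Theorem~\ref{thm:eqideals}(\ref{thm:eqideals:a}) to reduce the statement to a claim about the module $M = \cF(A) \in \BassMod{r}$. Whether $\cF$ is an equivalence (ordinary case) or an anti-equivalence (CS case), one obtains an isomorphism of rings $\End(A) \cong \End_R(M)$ or $\End(A) \cong \End_R(M)^{op}$, and in either case commutativity of one side is equivalent to commutativity of the other. Thus the problem is reduced to showing that $\End_R(M)$ is non-commutative.

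The key step is to extend scalars from $R$ to $K$. Since $K$ is a localization of the Noetherian order $R$ it is flat over $R$; moreover $M$, being finitely generated over the Noetherian ring $R$, is finitely presented. The standard flat base change identity for $\Hom$ therefore yields a canonical isomorphism of $K$-algebras
\[ \End_R(M) \otimes_R K \;\cong\; \End_K(M \otimes_R K) \;\cong\; \End_K(K^r) \;\cong\; M_r(K), \]
where the second isomorphism uses the defining property of $\BassMod{r}$. If $\End_R(M)$ were commutative, then the commutative-$R$-algebra structure would be preserved under the tensor product with $K$, and $M_r(K)$ would be commutative, contradicting $r>1$.

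The main subtlety is the base-change identification $\End_R(M)\otimes_R K \cong \End_K(M\otimes_R K)$, which must be invoked explicitly rather than treated as automatic. A more hands-on alternative, available when $R$ is Bass, would be to pick a decomposition $M \simeq I_1 \oplus \ldots \oplus I_r$ via Theorem~\ref{thm:decompmodules} and to exhibit two non-commuting elements of $\End_R(M)$ using the matrix description of Proposition~\ref{prop:homs} (e.g.\ a diagonal scaling versus a permutation of coordinates, adjusted by common denominators to land inside the relevant hom-ideals). This second route, however, is restricted to the Bass setting and is heavier on bookkeeping, so the abstract flatness argument is both cleaner and valid in full generality.
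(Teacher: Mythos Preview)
Your argument is correct and follows the same idea as the paper's one-line proof, which simply invokes the identity $\End(A)\otimes\Q = M_{r\times r}(K)$; you have just unpacked why that identity holds via the functor $\cF$ and flat base change. The only minor remark is that you could also get flatness more cheaply by noting $K = R\otimes_\Z\Q$, so tensoring over $R$ with $K$ is the same as tensoring over $\Z$ with $\Q$.
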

\begin{proof}
   It follows from the fact that $\End(A)\otimes \Q \simeq \cM_{r\times r}(K)$, see Proposition \ref{prop:homs}.
\end{proof}

\begin{cor}
\label{cor:isom_powers}
   Assume that $R$ is a Bass order.
   Let $A$ and $B$ be in $\AV(g)$.
   Then there exists a positive integer $r$ such that $A^r$ and $B^r$ are isomorphic if and only if $\End(A)=\End(B)$.
   If this is the case, then $r$ is bounded by the exponent of $\Pic(R)$.
\end{cor}
\begin{proof}
   Put $I=\cF(A)$ and $J=\cF(B)$.
   Assume first that $A^r \simeq B^r$.
   Then using Theorem \ref{thm:eqideals} we have that 
   \[ \bigoplus_{i=1}^rI \simeq_R \bigoplus_{i=1}^rJ \]   
   which is equivalent to having $I^r\simeq J^r$ and $(I:I)=(J:J)$.
   In particular this last condition give us $\End(A)=\End(B)$.
   Conversely, assume that the endomorphism rings of $A$ and $B$ are the same and put $S=\cF(\End(A))$.
   Let $r$ be the exponent of $\Pic(S)$, so that $I^r\simeq J^r \simeq S$.
   In particular, using the same argument as before in the opposite direction we obtain that $A^r\simeq B^r$.
   The last statement follows from the fact that since $S$ is an over-order of $R$, there is a surjective map from $\Pic(R)$ to $\Pic(S)$ and in particular the exponent of $\Pic(S)$ divides the exponent of $\Pic(R)$.
\end{proof}

\begin{remark}
   Note that Theorem \ref{thm:eqideals} is a generalization of \cite[Theorem 4.3]{MarAbVar18}, where we deal with the case when $h$ is square-free, that is, $r=1$.
\end{remark}
\begin{remark}
   Theorem \ref{thm:decompmodules} tells us that if $R$ is a Bass order, then every torsion-free $R$-module of finite rank is isomorphic to a direct sum of fractional $R$-ideals. The reverse implication does not hold. In \cite{basshy63}, the author describes when it fails, but overlooks some cases.
   The gaps were filled in \cite{NazRou67} and in \cite{Haef90} in the local case and in \cite{HaefLevy88} it is described how to go from the local case to the global case.
   We have not analyzed if those exceptions could arise from orders generated by Weil polynomials, which could potentially extend our description to more isogeny classes.
\end{remark}


\section{Polarizations}
\label{sec:polarization}

In this section we will continue using the same notation as in Section \ref{sec:powofbass}, but we will restrict to the case when $h$ is ordinary.
Our goal is to describe  what the polarizations of an abelian variety $A$ in $\AV(h)$ correspond to in the category $\BassMod{r}$ via the equivalence $\cF$ of Theorem \ref{thm:eqideals}.$\ref{thm:eqideals:b}$.

Note that $K$ is a CM-algebra, that is, there is an involution $a\mapsto \bar a$ that acts as complex conjugation after composing with any non-zero homomorphism $\vphi:K \to \C$.
In particular, we have that $R=\Z[\alpha,\bar\alpha]$ where $\bar\alpha = q/\alpha$.
Observe that the homomorphisms $K\to \C$ come in conjugate pairs.
We call a choice of half of these homomorphisms, one for each conjugate pair, a \emph{CM-type} of $K$. 
For every $R$-module $M$ in $\BassMod{r}$, since we can identify $M$ with a sub-$R$-module of $K^r$, we have an induced action $M\mapsto \bar M$.
Moreover, if we consider $M$ as a submodule of $K^r$, we see that the trace $\Tr_{K/\Q}:K\to \Q$ induces a non-degenerate bilinear form $\Tr$ on $M$ by
\[ \Tr:M\times M \to \Q, \qquad ((x_i)_{i=1}^r,(y_j)_{j=1}^r)\mapsto \sum_{i=1}^r \Tr_{K/\Q}(x_iy_i),  \]
where we think of all vectors in $K^r$ as column vectors.
In analogy to the $r=1$ case, when $M$ is a fractional $R$-ideal, we define the \emph{trace dual} $M^t$ of $M$ to be the dual module with respect to $\Tr$.
In particular, if $n=\deg(h)$ and we fix a $\Z$-basis
\[ M=\alpha_1\Z\oplus\ldots\oplus\alpha_{nr}\Z,\quad \text{with }\alpha_j \in K \]
then we can write
\[ M^t=\alpha^*_1\Z\oplus\ldots\oplus\alpha^*_{nr}\Z, \]
where $\alpha_i^*$ is the dual basis characterized by $\Tr_{K/\Q}(\alpha_i\alpha_j^*)=1$ if $i=j$ and $0$ otherwise.

\begin{prop}
\label{prop:dualmodule}
Let $A$ be an abelian variety in $\AV(h)$ and put $M=\cF(A)$.
If $A^\vee$ is the dual abelian variety of $A$, then there is a canonical isomorphism between $\cF(A^\vee)$ and $M^\vee$, where $M^\vee=\bar{M}^t$.
In particular, if $M=I_1\oplus \ldots \oplus I_r$ then $M^\vee=\bar{I}_1^t\oplus \ldots \oplus \bar{I}_r^t$.
\end{prop}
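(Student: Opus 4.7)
The plan is to reduce the statement to Howe's description \cite{Howe95} of the dual at the level of Deligne modules, and then to translate this description through the identification $\cF = \cG \circ \Ford$, using the fact that the trace pairing on $K^r$ is non-degenerate and splits coordinate-wise.

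First I would recall that if $(T,F)$ is the Deligne module of an ordinary abelian variety $A$, then Howe realizes the Deligne module of $A^\vee$ as $\Hom_\Z(T,\Z)$, equipped with the Frobenius $F^\vee$ defined by transposition of the Verschiebung, i.e.\ $(F^\vee\phi)(x) = \phi(Vx)$. The task is then to check that, after applying $\cG$, this dual Deligne module becomes $\bar M^t \subset K^r$ with its natural $R$-module structure by multiplication. Embedding $M = \cG(T,F)$ in $K^r$, the $\Q$-bilinear form $\Tr(x,y) = \sum_{i=1}^r \Tr_{K/\Q}(x_iy_i)$ is non-degenerate and identifies $\Hom_\Z(M,\Z)$ with the sublattice $M^t = \{y \in K^r : \Tr(M,y)\subseteq \Z\}$. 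A direct computation using $\Tr(Vx,y) = \Tr(x,Vy)$ shows that, under this identification, the action of $F^\vee$ corresponds to multiplication by $V=\bar{\alpha}$ on $M^t$. Since objects of $\BassMod{r}$ are required to carry the $R$-module structure in which Frobenius acts as multiplication by $\alpha$, I would compose with the involution $\bar{\phantom{x}}$, which exchanges multiplication by $\alpha$ with that by $\bar\alpha = q/\alpha$. The resulting submodule of $K^r$ is $\overline{M^t}$, which equals $\bar{M}^t$ because $\Tr_{K/\Q}$ is Galois-invariant.

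The second assertion follows formally from the first: if $M = I_1 \oplus \ldots \oplus I_r$, then the definition of $\Tr$ is diagonal with respect to this decomposition, so $M^t = I_1^t \oplus \ldots \oplus I_r^t$, and complex conjugation commutes with direct sums, giving $M^\vee = \bar I_1^t \oplus \ldots \oplus \bar I_r^t$. The main point requiring care is the compatibility of the Howe dual $F^\vee = V^{\mathrm{tr}}$ with the trace identification: for $r = 1$ this is precisely the translation of Howe's theorem in the style of \cite{MarAbVar18}, and the general case reduces to it because the trace pairing and the $R$-action on $K^r$ both split across the $r$ factors.
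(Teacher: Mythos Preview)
Your proposal is correct and follows essentially the same approach as the paper: both invoke Howe's description of $\Ford(A^\vee)$ as $(\Hom_\Z(T,\Z),\,\psi\mapsto\psi\circ V)$ and then identify this with $\bar M^t$ via the trace pairing on $K^r$. The only cosmetic difference is that the paper builds the complex conjugation directly into the isomorphism $\Hom_\Z(M,\Z)\to\bar M^t$, $\psi\mapsto\sum_i\psi(\alpha_i)\bar\alpha_i^*$, whereas you first identify $\Hom_\Z(M,\Z)\cong M^t$ (so that $F^\vee$ becomes multiplication by $\bar\alpha$) and then apply $\bar{\phantom{x}}$ afterwards; these two orderings are equivalent.
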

\begin{proof}
Let $\cG$ be the functor defined in the proof of Theorem \ref{thm:eqideals}.$\ref{thm:eqideals:a}$.
Put $(T,F)=\Ford(A)$, so that $\cG((T,F))=M$.
Following \cite[Proposition 4.5]{Howe95}, 
we have that $\Ford(A^\vee) = (T^\vee,F^\vee)$, where $T^\vee=\Hom_\Z(T,\Z)$ and $F^\vee(\psi) = \psi \circ V$ for every $\psi \in T^\vee$.
To conclude, we need to show that $\cG(T^\vee,F^\vee) = M^\vee$.
It is clear that $\cG$ sends $T^\vee$ to $\Hom_\Z(M,\Z)$ (as abelian groups).
Since the action of $F^\vee$ on $T^\vee$ is ``pre-composition with $V$'' and $\bar V =F$, we see that it will correspond, via $\cG$, to the multiplication by $\alpha$ after taking the complex conjugate.
More precisely, write $M=\alpha_1\Z\oplus\ldots\oplus\alpha_{nr}\Z$, for $\alpha_j\in K$, with $n=[K:\Q]$, and consider the $\Z$-linear isomorphism
\begin{align*}
\rho:\Hom_\Z(M,\Z) & \to \bar{M}^t \\
\psi & \longmapsto \sum_{i=1}^{nr}\psi(\alpha_i)\bar\alpha_i^*
\end{align*}
with inverse 
\[ \Tr(\bar{x}^T , -) \longmapsfrom x, \]
where $\bar{x}^T$ is the transpose of $\bar{x}$.
Using this identification, the pre-composition with $\bar \alpha$ on $\Hom_\Z(M,\Z)$ will correspond to multiplication by $\alpha$ on $\bar{M}^t$, where, as usual, $\alpha$ is the image of the Frobenius endomorphism via the functor $\cF$.
\end{proof}

\begin{remark}
From now on we will fix a basis of $K^r$ and fix an isomorphism $M\otimes K\simeq K^r$ for each $M\in \BassMod{r}$.
In particular this will allow us to represent morphisms (in a non-canonical way) in $\BassMod{r}$ as matrices with entries in $K$.
\end{remark}

\begin{cor}
\label{cor:dualmorph}
	Let $\mu:A \to B$ be a morphism of abelian varieties in $\AV(h)$.
	Put $\cF(A)=M$, $\cF(B)=N$ and $\cF(\mu)=\Lambda:M\to N$ in $\BassMod{r}$.
	Then $\mu$ is an isogeny if and only if $\det \Lambda \in K^\times$.
	Moreover, the dual morphism $\mu^\vee:B^\vee \to A^\vee$ corresponds via $\cF$ to the morphism $\Lambda^\vee=\bar\Lambda^T:N^\vee\to M^\vee$, where $\bar\Lambda^T$ is the transpose of $\bar\Lambda$.
\end{cor}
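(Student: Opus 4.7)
The plan is to handle the two claims separately, building on the structural description of $\cF$ as the composition of $\Ford$ with the functor $\cG$ from the proof of Theorem~\ref{thm:eqideals}, and on the trace-pairing identification established in the proof of Proposition~\ref{prop:dualmodule}.

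For the isogeny characterization, since $\cF$ is an equivalence of categories, $\mu$ becomes invertible in the isogeny category of abelian varieties if and only if $\Lambda\otimes_R K\colon K^r\to K^r$ is a $K$-linear isomorphism.  By Proposition~\ref{prop:homs}, $\Lambda$ is already given by an element of the $r\times r$ matrix ring over $K$, so this happens precisely when $\det\Lambda$ is a unit in $K$, i.e.\ $\det\Lambda\in K^\times$.  (Here $K$ is a finite product of number fields, so $K^\times$ consists of those elements having nonzero image in each factor.)

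For the identification of the dual morphism, I follow the strategy used for Proposition~\ref{prop:dualmodule}.  Writing $\tilde\Lambda\colon T_A\to T_B$ for the $\Z$-linear map on Deligne modules induced by $\mu$, the description in \cite[Proposition~4.5]{Howe95} realizes $\Ford(\mu^\vee)$ as the $\Z$-linear dual $\psi\mapsto\psi\circ\tilde\Lambda$.  It remains to read this off through the trace-pairing isomorphism $\Hom_\Z(N,\Z)\simeq\bar N^t$ (and similarly for $M$).  For $y\in\bar N^t$ the corresponding functional is $\psi_y(n)=\Tr_{K/\Q}(\bar y^T n)$; since the entries of $\Lambda$ lie in $K$ we compute
\[
\psi_y(\Lambda m)=\Tr_{K/\Q}\bigl(\bar y^T\Lambda m\bigr)=\Tr_{K/\Q}\bigl((\Lambda^T\bar y)^T m\bigr)=\psi_{\bar\Lambda^T y}(m).
\]
Thus pre-composition by $\Lambda$ on functionals corresponds to left-multiplication by $\bar\Lambda^T$ on trace duals, yielding $\Lambda^\vee=\bar\Lambda^T$.

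The main point of care is purely bookkeeping: tracking the interplay of complex conjugation (coming from the $\alpha\mapsto\bar\alpha^*$ appearing in the trace-dual identification) with the transpose (coming from turning pre-composition into left-multiplication under a bilinear pairing).  The $R$-linearity of $\bar\Lambda^T$, and in particular the fact that it sends $\bar N^t$ into $\bar M^t$, follows from the $R$-linearity of $\Lambda$ together with $\bar\alpha=q/\alpha\in R$; alternatively it is forced by $\cF$ being an equivalence.
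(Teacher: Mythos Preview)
Your argument is correct and follows essentially the same route as the paper: you decompose $\cF=\cG\circ\Ford$, reduce the isogeny criterion to invertibility of $\Lambda$ over $K$, and obtain $\Lambda^\vee=\bar\Lambda^T$ by reading Howe's description of the dual morphism through the trace-pairing identification of Proposition~\ref{prop:dualmodule}. In fact your trace computation $\psi_y\circ\Lambda=\psi_{\bar\Lambda^T y}$ spells out the step that the paper's proof leaves as ``Using Proposition~\ref{prop:dualmodule}, we see that \ldots'', so your version is slightly more explicit but not genuinely different.
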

\begin{proof}
    Put $(T,F)=\Ford(A)$, $(T',F')=\Ford(B)$ and $\Ford(\mu)=(T,F)\overset{\lambda}{\to} (T',F')$.    
    Then $\mu$ is an isogeny if and only if the induced morphism $\lambda\otimes \Q$ is invertible.
    Let $\cG$ be the functor defined in the proof of Theorem \ref{thm:eqideals}.$\ref{thm:eqideals:a}$.
    Observe that $\cG(\lambda)=\Lambda$ and hence $\lambda\otimes \Q$ is invertible if and only if the matrix $\Lambda$ is invertible over $K$.
    
    Put $\Ford(\mu^\vee)=\lambda^\vee$ where $(T'^\vee,F'^\vee)\overset{\lambda^\vee}{\to} (T^\vee,F^\vee)$ is
    defined by $\lambda^\vee(\psi)=\psi\circ \lambda $ for every $\psi\in T'^\vee$.
    Using Proposition \ref{prop:dualmodule}, we see that, if $\cG(\lambda)=\Lambda$ then $\cG(\lambda^\vee)=\bar\Lambda^T$.
\end{proof}

In order to describe the polarizations we need a particular kind of CM-type which, roughly speaking, detects the complex structure ``coming from characteristic $p$'' on a pair $(T,F)$ in $\Modord{q}$.
More precisely, put
\[ \Phi=\set{ \vphi \in \Hom(K,\C) : v_p(\vphi(\alpha)) > 0 }, \]
where $v_p$ is the $p$-adic valuation induced by a fixed isomorphism $\bar\Q_p \simeq \C$.
In \cite{MarAbVar18} we give an algorithm to compute such a $\Phi$.
Recall that an element $a$ in $K$ is called \emph{totally imaginary} if $\bar a = -a$.
We say that a totally imaginary $a$ is \emph{$\Phi$-non-positive} if $\Im(\vphi(a)) \leq 0$ for every $\vphi$ in $\Phi$.

Observe that in $\Modord{q}$, an isogeny $\lambda:(T,F) \to (T^\vee,F^\vee)$ induces a bilinear form 
\[  b:T\times T \to \Z \qquad b(s,t)=\lambda(t)(s). \]
Then there exists a unique $K$-sesquilinear form $S$ on $T \otimes \Q$ such that $b = \Tr_{K/\Q}\circ S$ and,
using \cite[Proposition 4.9]{Howe95}, we have that $\mu$ is a polarization if and only if the associated $S$ is skew-Hermitian and for every $a$ in $K$ the element $S(a,a)$ is $\Phi$-non-positive.

\begin{thm}
\label{thm:pols}
    Let $A$ be an abelian variety in $\AV(h)$ and let $\mu:A\to A^\vee$ be an isogeny.
    Put $\cF(\mu)=\Lambda:M\to M^\vee $.    
    Then $\mu$ is a polarization if and only if
    \begin{itemize}
       \item $\Lambda=-\bar\Lambda^T$ and,
       \item for every column vector $a$ in $K^r$, the element
    $a^T\bar\Lambda \bar a $
    is $\Phi$-non-positive.  
    \end{itemize}    
    We have $\deg \mu = [M^\vee : \Lambda M]$.    
\end{thm}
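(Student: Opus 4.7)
The plan is to pull Howe's criterion for polarizations back through the equivalence $\cF$ and unravel it explicitly in matrix form. Recall from \cite[Proposition 4.9]{Howe95} that an isogeny $\mu : A \to A^\vee$ is a polarization if and only if the $\Q$-bilinear form $S$ on $T \otimes \Q$ determined by $b = \Tr_{K/\Q} \circ S$, where $b(s,t) = \lambda(t)(s)$ with $\lambda = \Ford(\mu)$, is skew-Hermitian and satisfies that $S(a,a)$ is $\Phi$-non-positive for every $a \in T \otimes \Q$. The task therefore reduces to translating both conditions into concrete statements about the matrix $\Lambda$.

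First I would make $b$ explicit using the identification $\Hom_\Z(M,\Z) \cong \bar{M}^t$ from the proof of Proposition \ref{prop:dualmodule}, under which an element $x \in \bar{M}^t$ corresponds to the functional $s \mapsto \Tr_{K/\Q}(\bar{x}^T s)$. Substituting $x = \Lambda t$ gives
\[ b(s,t) = \Tr_{K/\Q}\bigl( \overline{\Lambda t}^T s \bigr) = \Tr_{K/\Q}\bigl( \bar t^T \bar\Lambda^T s \bigr), \]
so $S(s,t) = \bar t^T \bar\Lambda^T s$, which is $K$-linear in $s$ and $K$-antilinear in $t$, as expected for a sesquilinear form over the CM-algebra $K$.

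Next I would unpack the two conditions. Expanding the skew-Hermitian identity $S(t,s) = -\overline{S(s,t)}$ entrywise, using commutativity of $K$ and relabeling indices, yields $\bar\Lambda_{ji} = -\Lambda_{ij}$ for all $i,j$, i.e., $\Lambda = -\bar\Lambda^T$. Evaluating at $s=t=a$ and transposing the resulting scalar yields $S(a,a) = a^T \bar\Lambda \bar a$, so the $\Phi$-non-positivity condition matches the second bullet verbatim.

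For the degree formula, since $\Ford$ identifies $\ker\mu$ with the finite $\Z$-module $T^\vee/\lambda(T)$ and the functor $\cG$ from the proof of Theorem \ref{thm:eqideals} preserves underlying abelian groups, we obtain $\deg\mu = \#\ker\mu = [T^\vee : \lambda(T)] = [M^\vee : \Lambda M]$. The main obstacle I anticipate is bookkeeping the conjugations and transposes: specifically, ensuring that identifying $M^\vee$ with $\bar M^t$ (rather than $M^t$) feeds through the duality pairing so that skew-Hermitianity reads off as $\Lambda = -\bar\Lambda^T$ and not some twisted variant, and that the $K$-sesquilinear conventions match those of \cite{Howe95}. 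Once that is pinned down, everything reduces to direct computation.
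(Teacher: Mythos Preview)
Your proposal is correct and follows essentially the same route as the paper: both invoke Howe's criterion, compute $S(s,t)=\bar t^{T}\bar\Lambda^{T}s$ by composing $\Lambda$ with the inverse of the isomorphism $\Hom_{\Z}(M,\Z)\cong \bar M^{t}$ from Proposition~\ref{prop:dualmodule}, and then read off the two conditions by direct matrix manipulation. The only cosmetic difference is that the paper cites \cite[Section 4]{how04} for the degree formula whereas you argue it via the index of $\lambda(T)$ in $T^\vee$, which is the same content.
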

\begin{proof}
    Put $\Ford(A)=(T,F)$.
    Using the functor $\cG$ from the proof of Theorem \ref{thm:eqideals}.$\ref{thm:eqideals:a}$ we can identify $T\otimes_\Z \Q$ with $K^r$ and, by abuse of notation, we will denote also by $b$ and $S$ the forms on $M$ and $M\otimes \Q$ induced by $\cG$.
    Let $m$ be a (column) vector in $M$ (seen as a submodule of $K^r$).
    Composing $\Lambda$ with the inverse of the isomorphism $\rho^{-1}$ introduced in the proof of Proposition \ref{prop:dualmodule}, we obtain
    \begin{align*}
       & M \overset{\Lambda}{\longrightarrow} M^\vee \overset{\rho^{-1}}{\longrightarrow} \Hom_\Z\left(M,\Z\right) \\
       & m \longmapsto \Lambda m \longmapsto \Tr((\bar{\Lambda m})^T , -)=\Tr(\bar{m^T\Lambda^T} , -).
    \end{align*}
    So we deduce that the bilinear form $S$ is given by
    \[ S(a,a') = (\bar{a'})^T\bar{\Lambda}^T a \]
    where $a$ and $a'$ are column vectors in $K^r$.
    Thus $S$ is skew-Hermitian if and only if $S(a,a')$ equals
    \[ -\bar{S(a',a)} = -\bar{ (\bar{a})^T\bar{\Lambda}^T a'} = -(\bar{a'}^T\Lambda a)  \]
    for arbitrary $a$ and $a'$, which is equivalent to $\Lambda = -\bar\Lambda^T$.\\
    The second condition follows directly from this description.
	The statement about the degree follows from \cite[Proposition 4.14]{Howe95}.
\end{proof}

Let $(M,\Lambda)$ and $(M',\Lambda')$ be the modules corresponding to two polarized abelian varieties.
A \emph{morphism of polarized abelian varieties} will be a morphism $\Psi:M \to M'$ satisfying
\[\bar{\Psi}^T\Lambda'\Psi=\Lambda,\]
since the dual morphism $\Psi^\vee $ is $ \bar{\Psi}^T$ by Corollary \ref{cor:dualmorph}.
Denote by $\Pol(M)$ the set of polarizations of $M$.

\begin{thm}
\label{thm:isompols}
   There is a degree-preserving action of $\Aut(M)$ on the set $\Pol(M)$ given by
   \begin{align*}
      \Aut(M) \times \Pol(M) & \longrightarrow \Pol(M) \\
      (U ,\Lambda) & \longmapsto \bar U^T \Lambda U
   \end{align*}
   Two polarizations of $M$ give rise to isomorphic polarized abelian varieties if and only if they lie in the same orbit.
   In particular, given a polarization $\Lambda$ on $M$, we have
   \[ \Aut(M,\Lambda) = \Stab( \Lambda ). \]
\end{thm}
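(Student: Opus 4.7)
The plan is to deduce all three assertions directly from the description of polarizations in Theorem \ref{thm:pols}, the formula for dual morphisms in Corollary \ref{cor:dualmorph}, and the definition of a morphism of polarized abelian varieties given immediately before the statement.

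The first step is to verify that if $U \in \Aut(M)$ and $\Lambda \in \Pol(M)$, then $\bar U^T \Lambda U$ is again a polarization. This composition has the right source and target because $U$ is an automorphism of $M$ and, by Corollary \ref{cor:dualmorph}, $\bar U^T = U^\vee$ is an automorphism of $M^\vee$. The skew-Hermitian condition $\bar U^T \Lambda U = -\overline{\bar U^T \Lambda U}^T$ is a one-line consequence of $\bar\Lambda^T = -\Lambda$, and the $\Phi$-non-positivity condition for $\bar U^T \Lambda U$ reduces to that for $\Lambda$ via the substitution $b = Ua$: for any column vector $a \in K^r$, the element $a^T \overline{\bar U^T \Lambda U}\,\bar a$ equals $(Ua)^T \bar\Lambda\,\overline{Ua} = b^T \bar\Lambda \bar b$, which is $\Phi$-non-positive by hypothesis on $\Lambda$. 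A short matrix computation shows $(UV)\cdot\Lambda = V \cdot (U \cdot \Lambda)$ and that the identity acts trivially, so we have a well-defined (right) action.

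The second step is degree preservation and the orbit-isomorphism correspondence. Using $\deg \mu = [M^\vee : \Lambda M]$ from Theorem \ref{thm:pols}, the equality $UM = M$, and the fact that $\bar U^T$ is an automorphism of $M^\vee$, we get $[M^\vee : \bar U^T \Lambda U M] = [M^\vee : \Lambda M]$. For the orbit statement, a morphism of polarized abelian varieties $(M, \Lambda) \to (M, \Lambda')$ is, by definition, a morphism $\Psi \in \Hom_R(M, M)$ satisfying $\bar\Psi^T \Lambda' \Psi = \Lambda$; such a $\Psi$ is an isomorphism of polarized abelian varieties precisely when $\Psi \in \Aut(M)$, i.e.\ exactly when $\Lambda$ and $\Lambda'$ lie in the same $\Aut(M)$-orbit. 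Specializing to $\Lambda = \Lambda'$ yields $\Aut(M,\Lambda) = \Stab(\Lambda)$.

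The main obstacle, such as it is, lies in careful bookkeeping with the interplay of conjugation, transposition, and matrix multiplication, and in confirming that the $\Phi$-non-positivity condition transforms correctly under the change of variables $a \mapsto Ua$. Beyond this, every assertion is obtained by unwinding definitions and invoking the two results from the previous section.
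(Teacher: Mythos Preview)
Your argument is correct and is exactly the approach the paper takes: the paper's proof is the single sentence ``All the statements follow directly from Theorem~\ref{thm:pols},'' and what you have written is simply the explicit unwinding of that sentence using Corollary~\ref{cor:dualmorph} and the definition of a morphism of polarized abelian varieties. Your bookkeeping with conjugate-transpose is accurate, and your observation that the rule $(U,\Lambda)\mapsto \bar U^T\Lambda U$ defines a right action is the correct reading of the statement.
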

\begin{proof}
   Let $\Lambda$ be a polarization of $M$ and $U$ an automorphism of $M$.
   Observe that the first condition of Theorem \ref{thm:pols} is satisfied for $\bar U^T \Lambda U$, since 
   \[ -(\bar{\bar U^T \Lambda U})^T = -(U^T\bar{\Lambda}\bar{U})^T = -\bar{U}^T(U^T\bar{\Lambda})^T = -\bar{U}\bar{\Lambda}^TU = \bar{U}\Lambda U, \]
   where the last equality holds because $-\bar{\Lambda}^T=\Lambda$.
   Note that given $a \in K^r$ we have that
   \[ a^T \bar{\bar{U}^T \Lambda U} \bar{a} = (Ua)^T\bar{\Lambda}\bar{(Ua)}\]
   which is then $\Phi$-non-negative, since $U$ is also an automorphism of $K^r$.
   Hence the second condition of Theorem \ref{thm:pols} holds as well and $\bar U^T \Lambda U$ is a polarization of $M$.
   The statement on the degree follows from the existence of $R$-linear isomorphisms
   \[ \dfrac{M^\vee}{(\bar U^T \Lambda U) M } \simeq \dfrac{(\bar U^T)^{-1}M^\vee}{(\Lambda U) M }\simeq \dfrac{M^\vee}{\Lambda M }. \]
\end{proof}

\begin{remark}
   Let $\pPol(M)$ be the subset of $\Pol(M)$ consisting of principal polarizations.
   Since the action of $\Aut(M)$ on $\Pol(M)$ is degree-preserving, we get an induced action on $\pPol(M)$. 
   Recall that an abelian variety defined over a finite field admits only finitely many non-isomorphic principal polarizations, or, in other words, the quotient 
   \[ Q=\faktor{\pPol(M)}{\Aut(M)} \]
   is finite. 
   Moreover, the action of $\Aut(M)$ on $\pPol(M)$ can be extended to the set $\Isom(M,M^\vee)$ of isomorphisms from $M$ to $M^\vee$.
   In particular, by fixing an element $A_0$ in $\Isom(M,M^\vee)$, we get 
   \[ \Isom(M,M^\vee)=\set{ A_0V : V \in \Aut(M) }. \]
   This suggests that a good understanding of $\Aut(M)$ will most likely allow us to handle $Q$, but if $r>1$, then $\Aut(M)$ is an infinite non-abelian group, making the situation computationally difficult, even if we were able to produce a (finite) set of generators.
\end{remark}

Recall that a polarized abelian variety $(A,\mu)$ is called \emph{decomposable} if there are proper subvarieties $B_1$ and $B_2$ of $A$, admitting polarizations $\beta_1$ and $\beta_2$, respectively, such that
\[ (A,\mu) \simeq (B_1\times B_2,\mu_1\times \mu_2). \]

\begin{cor}
\label{cor:decomp}
    Let $(M,\Lambda)$ in $\BassMod{r}$ correspond to a polarized abelian variety $(A,\mu)$. Then $(A,\mu)$ is decomposable if and only if there are an integer $m>1$ and polarized modules
	$(M_i,\Lambda_i) \in \BassMod{r_i}$ for $i=1,\ldots,m$ with $r_1+\ldots+r_m=r$
    and an isomorphism 
    \[ P: M_1\oplus \ldots \oplus M_m \to M \]
    satisfying
    \[\bar{P}^T(\Lambda_1\oplus\ldots\oplus\Lambda_m)P = \Lambda. \]
\end{cor}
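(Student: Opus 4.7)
The plan is to translate the geometric decomposability statement into the module category by directly applying the equivalence $\cF$ from Theorem~\ref{thm:eqideals}, combined with the description of duals from Proposition~\ref{prop:dualmodule} and the characterization of polarizations in Theorem~\ref{thm:pols}. The first ingredient to establish is that $\cF$ is additive: the underlying functors $\Ford$ and $\Fcs$ preserve finite products, and the functor $\cG$ built in the proof of Theorem~\ref{thm:eqideals} turns direct sums of pairs $(T,F)$ into direct sums of $R$-modules in the expected way. Consequently, if $B_i\in\AV(g^{r_i})$ corresponds to $M_i\in\BassMod{r_i}$, then $B_1\times\ldots\times B_m$, which has characteristic polynomial $g^{r_1+\ldots+r_m}=g^r=h$ and hence lies in $\AV(h)$, corresponds to $M_1\oplus\ldots\oplus M_m\in\BassMod{r}$.

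The second ingredient is that product polarizations translate to block-diagonal matrices. By Proposition~\ref{prop:dualmodule} one has $(M_1\oplus\ldots\oplus M_m)^\vee = M_1^\vee\oplus\ldots\oplus M_m^\vee$, and under this identification the product map $\prod_i\beta_i\colon\prod_i B_i\to\prod_i B_i^\vee=(\prod_i B_i)^\vee$ goes via $\cF$ to the block-diagonal map $\Lambda_1\oplus\ldots\oplus\Lambda_m$. I would then verify, using Theorem~\ref{thm:pols}, that this block-diagonal matrix is indeed a polarization on $M_1\oplus\ldots\oplus M_m$: the identity $\overline{\Lambda_1\oplus\ldots\oplus\Lambda_m}^{\,T}=-(\Lambda_1\oplus\ldots\oplus\Lambda_m)$ is block-wise, and for any concatenated column vector $a=(a_1,\ldots,a_m)^T$ in $K^{r_1}\times\ldots\times K^{r_m}=K^r$ one has
\[
a^T\,\overline{\Lambda_1\oplus\ldots\oplus\Lambda_m}\,\bar a \;=\; \sum_{i=1}^m a_i^T\,\bar\Lambda_i\,\bar a_i,
\]
which is $\Phi$-non-positive as a sum of $\Phi$-non-positive elements.

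With these two translations in place, the corollary becomes a bookkeeping argument. For the forward direction, an isomorphism of polarized abelian varieties $(A,\mu)\simeq(\prod_i B_i,\prod_i\beta_i)$ yields, via $\cF$, an $R$-linear isomorphism between $M$ and $M_1\oplus\ldots\oplus M_m$ satisfying the expected compatibility between $\Lambda$ and $\Lambda_1\oplus\ldots\oplus\Lambda_m$ coming from the definition of morphism of polarized modules given before Theorem~\ref{thm:isompols}. For the converse, given $(M_i,\Lambda_i)$ and $P$, the inverse equivalence produces polarized abelian varieties $(B_i,\beta_i)$, and the matrix compatibility on $P$ translates back to an isomorphism of polarized varieties $(A,\mu)\simeq(\prod_i B_i,\prod_i\beta_i)$, witnessing decomposability (note $m>1$ and $r_i<r$ guarantee the $B_i$ are proper sub-varieties).

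The only delicate point is the second step: verifying that $\cF$ intertwines product polarizations with block-diagonal forms, which requires carefully tracking the dual-module identification of Proposition~\ref{prop:dualmodule} across a direct sum. Once that is in place, everything else is an immediate consequence of the equivalence of categories.
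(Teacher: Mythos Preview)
Your proposal is correct and follows the same route as the paper, which simply records that the statement follows directly from the description of morphisms of polarized modules preceding Theorem~\ref{thm:isompols}. You have spelled out in detail the two ingredients (additivity of $\cF$ and the block-diagonal translation of product polarizations) that the paper leaves implicit in its one-line proof.
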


\begin{proof}
	Let $\cF(A,\mu)=(M,\Lambda)$ and $\cF(B_i,\mu_i)=(M_i,\Lambda_i)$ for $i=1,\ldots,m$.
	Then there exists a polarized isomorphism $f:\prod_i(B_i,\mu_i) \to (A,\mu)$ if and only if there exists an $R$-linear map $P$ as in the statement of the corollary.
\end{proof}
\begin{remark}
\label{rmk:obs_to_dec}
Assume that $R$ is Bass and that $r>1$. 
For simplicity, let us assume that $g$ is irreducible and let $(A,\mu)$ be a polarized abelian variety in $\AV(g^r)$.
By Corollary \ref{cor:abprod} we know that $A$ is isomorphic to the product of $r$ simple abelian varieties.
Hence Corollary \ref{cor:decomp} tells us that the obstruction for $(A,\mu)$ to be decomposable is a property of the polarization $\mu$.
\end{remark}
The next example shows that a polarized module $(M,\Lambda)$ can be decomposable even if there is no way to put $\Lambda$ into a block diagonal matrix by the action of an element of $\Aut(M)$.
\begin{example}
      Let $K=\Q(F)$ be the number field generated by the $4$-Weil polynomial $h=x^2-x+4$.
      The order $\cO=\Z[F,4/F]=\Z+F\Z$ is maximal in $K$ and it has Picard group of order $2$.
      Put $I=2\Z+F\Z$.
      One can check that the $\cO$-ideal $I$ is not principal and hence represents the non-trivial ideal class of $\cO$.
      The previous discussion implies that there are $2$ isomorphism classes of elliptic curves, corresponding to $\cO$ and $I$, in that isogeny class.
      Let $y=\frac{1}{15}(-1+2F)$ and $z=\frac{1}{30}(-1+2F)$ be the principal polarizations of $\cO$ and $I$, respectively, that is, $y\cO=\bar{\cO}^t$ and $zI=\bar{I}^t$. 
      Now consider the abelian surfaces $\cO\oplus \cO$ and $I \oplus I$ and the following matrices
      \[P_0 = \begin{pmatrix}
               1 & \frac{-3-3F}{2}\\
	       \frac{-3-3F}{2} & \frac{-13+13F}{2}
              \end{pmatrix},\quad
	M = \begin{pmatrix}
               4 & 2\bar F -1 \\
               2 F -1 & 4
            \end{pmatrix},
      \]
      \[      
        D = \begin{pmatrix}
               y & 0  \\
               0 & y
            \end{pmatrix},\quad
        D' = \begin{pmatrix}
               z & 0  \\
               0 & z
            \end{pmatrix}.\]            
      Note that $M$ is a unimodular Hermitian matrix in $\GL_2(\cO)$ and hence $DM$ is a principal polarization on $\cO\oplus \cO$.
      Also, observe that the matrix $P_0$ represents an isomorphism
      \[I\oplus I \to \cO\oplus \cO,\]
      and that every such isomorphism can be described as $AP_0$ for some $A \in \GL_2(\cO)$.
      
      One can verify by using results contained in \cite{GHR18} that the polarization $DM$ is not the pullback of the product polarization $D$ of $\cO\oplus \cO$, that is, there is no matrix $B\in \GL_2(\cO)$ such that
      \[ \bar{B}^TDM B = D. \]
      On the other hand $DM$ is the pullback of the product polarization $D'$ of $I\oplus I$.
      Indeed we have
      \[ (\bar{AP_0})^TDM AP_0 = D', \]
      for 
      \[ A=\begin{pmatrix}
	   7-10F & -3-2F\\
	   -23+3F & -4+3F  
         \end{pmatrix} \in GL_2(\cO).
      \]
      Again, the matrix $A$ has been computed using results from \cite{GHR18}.
   \end{example}

\section{Examples}
\label{sec:examples}
With the previous results we can create effective algorithms to
compute examples of different phenomena of abelian varieties over
finite fields that are isogenous to powers.

\begin{example}
   In this example we compute all the isomorphism classes in the isogeny class $\AV(g^3)$ where $g=x^6 - x^5 + 2x^4 - 2x^3 + 4x^2 - 4x + 8$.
   Note that $g$ corresponds to a simple isogeny class of abelian varieties over $\F_2$.
   Define $K=\Q[x]/(g)$ and $\alpha =x \mod g$ and put $R=\Z[\alpha,\bar \alpha]$.
   The only over-order of $R$ is the maximal order $\cO_K$ of $K$ and, since $R$ is Gorenstein by \cite[Theorem 11]{CentelegheStix15} we get that $R$ is Bass.
   The Picard Group of $R$ is isomorphic to the cyclic group of order $3$ and it is generated by
   \[ I = 8R + \left(-32-11\alpha-\frac32\alpha^2-3\alpha^3-\frac34\alpha^4+\frac14\alpha^5\right)R.\]
   The maximal order $\cO_K$ is a principal ideal domain.
   Using Theorem \ref{thm:eqideals}\ref{thm:eqideals:b} we can count the isomorphism classes in $\AV(g^3)$, which are functorially represented by the following $R$-modules in $\cB(3)$:
   \begin{align*}
	& M_1=R \oplus R \oplus R\\
	& M_2=R \oplus R \oplus I\\
	& M_3=R \oplus R \oplus I^2\\
	& M_4=R \oplus R \oplus \cO_K\\
	& M_5=R \oplus \cO_K \oplus \cO_K\\
	& M_6=\cO_K \oplus \cO_K \oplus \cO_K.
   \end{align*}
   Using Proposition \ref{prop:homs} we can recover the endomorphism rings of the abelian varieties by their (functorial) representations as endomorphism rings of the modules $M_i$.
   For example, $\End(M_1)$ is the ring of $3\times 3$ matrices over $R$, while $\End(M_2)$ is the matrix ring
   \[ \begin{pmatrix}
         R & R & (R:I) \\
         R & R & (R:I) \\
         I & I & R
      \end{pmatrix}.\]
\end{example}

\begin{example}
  Let $g=(x^2 - 3x + 13)(x^2 + 6x + 13)$.
  Define $K=\Q[x]/(g)$ and $\alpha =x \mod g$ and put $R=\Z[\alpha,\bar \alpha]$.
  Using the algorithm described in \cite{MarICM18} we can compute the $6$ over-orders of $R$ and verify that they are all Gorenstein, that is, that $R$ is a Bass order.
  We apply Theorem \ref{thm:eqideals} to compute the number of isomorphism classes of abelian varieties in the isogeny class determined by $g^r$ as $r$ increases and collect the results for $1\leq r \leq 10$ in the following table.
\begin{center}
\begin{tabular}{|c|c|c|c|c|c|c|c|c|c|c|}
   \hline
   r 					& $1$ & $2$ & $3$ & $4$ & $5$ & $6$ & $7$ & $8$ & $9$ & $10$ \\ \hline
   &&&&&&&&&&\\[-10pt]
   $\#\left(\faktor{\AV(g^r)}{\simeq}\right)$ 	& $62$ & $97$ & $144$ & $206$ & $286$ & $387$ & $512$ & $664$ & $846$ & $1061$\\[5pt] \hline
\end{tabular}
\end{center}
\end{example}

\begin{example}
  In this example we will prove that certain isomorphism classes in the isogeny class $\AV(g^2)$, with  $g=x^4 - 2x^3 - 7x^2 - 22x + 121$ are not principally polarizable.
  Note that $g$ is irreducible and it corresponds to an ordinary isogeny class of abelian surfaces over $\F_{11}$.
  Define $K=\Q[x]/(g)$ and $\alpha =x \mod g$ and put $R=\Z[\alpha,\bar \alpha]$.
  The only over-order of $R$ is the maximal order $\cO_K$ of $K$ and, since $R$ is Gorenstein by \cite[Theorem 11]{CentelegheStix15} we get that $R$ is Bass.
  We computed that $\Pic(R)\simeq \Z/2\Z\times \Z/2\Z$ and $\Pic(\cO_K)\simeq \Z/2\Z$.
  One can verify that the classes of
  \[I=31R+(7+12\alpha-\alpha^2)R \text{ and } J=6734R+(2053-\alpha-\alpha^2)R \]
  generate $\Pic(R)$ and that the class of $J\cO_K$ is the generator of $\Pic(\cO_K)$.
  Using Theorem \ref{thm:eqideals} we can list the $8$ isomorphism classes of module in $\BassMod{2}$:
  \begin{align*}
  	M_1 &=R\oplus R	& M_5 &=R\oplus\cO_K \\
  	M_2 &=R\oplus I	& M_6 &=R\oplus J\cO_K \\
  	M_3 &=R\oplus J	& M_7 &=\cO_K\oplus\cO_K\\
  	M_4 &=R\oplus IJ	& M_8 &=\cO_K\oplus J\cO_K
  \end{align*}  
  Again using Theorem \ref{thm:eqideals} one can verify that all modules but $M_3$ and $M_4$ are self-dual, that is, isomorphic to their own dual. 
  Hence we can deduce that the abelian varieties corresponding via $\cF$ to $M_3$ and $M_4$ are not principally polarizable and hence cannot be Jacobian of curves.
  By \cite[Theorem 1.3]{Howe95} the isogeny class $\AV(g)$ is not principally polarizable, hence the abelian varieties in $\AV(g^2)$ do not admit a product polarization of degree $1$.
  On the other hand, in view of the fact that $\AV(g^2)$ becomes a $4$-th power of an elliptic curve over $\F_{11^2}$, the modules $M_i$ for $i\neq 3,4$ might still admit a principal polarization.
  As mentioned in the introduction, the behavior of the machinery developed with respect to field extensions will be investigated in a forthcoming paper. 
\end{example}

\begin{example}
  For all primes $p\leq 29$ and integers $0<r\leq 10$ we compute the number $N_{r,p}$ of isomorphism classes of abelian varieties over $\F_p$ that are isogenous to the $r$-th power of an elliptic curve over $\F_p$.
  Note that a characteristic polynomial $h$ of such an isogeny class cannot have real roots.
  Indeed if $h$ has a real roots then $x^2-p$ divides $h$ and hence an abelian surface with characteristic polynomial $(x^2-p)^2$, which is necessarily simple, would appear as an isogeny factor.
\begin{center}
\begin{tabular}{|c|c|c|c|c|c|c|c|c|c|c|}
   \hline
   $p$			   & $2$	   & $3$ & $5$ & $7$ & $11$ & $13$ & $17$ & $19$ & $23$ & $29$ \\ \hline   
   $N_{1,p}$       &$5$    &$8$    &$12$   &$18$   &$22$   &$32$   &$36$   &$42$   &$46$   &$60$   \\\hline
   $N_{2,p}$       &$5$    &$9$    &$14$   &$23$   &$25$   &$44$   &$44$   &$55$   &$53$   &$74$   \\\hline
   $N_{3,p}$       &$5$    &$10$   &$16$   &$28$   &$28$   &$58$   &$54$   &$68$   &$60$   &$90$   \\\hline
   $N_{4,p}$       &$5$    &$11$   &$18$   &$33$   &$31$   &$74$   &$66$   &$81$   &$67$   &$108$  \\\hline
   $N_{5,p}$       &$5$    &$12$   &$20$   &$38$   &$34$   &$92$   &$80$   &$94$   &$74$   &$128$  \\\hline
   $N_{6,p}$       &$5$    &$13$   &$22$   &$43$   &$37$   &$112$  &$96$   &$107$  &$81$   &$150$  \\\hline
   $N_{7,p}$       &$5$    &$14$   &$24$   &$48$   &$40$   &$134$  &$114$  &$120$  &$88$   &$174$  \\\hline
   $N_{8,p}$       &$5$    &$15$   &$26$   &$53$   &$43$   &$158$  &$134$  &$133$  &$95$   &$200$  \\\hline
   $N_{9,p}$       &$5$    &$16$   &$28$   &$58$   &$46$   &$184$  &$156$  &$146$  &$102$  &$228$  \\\hline
   $N_{10,p}$      &$5$    &$17$   &$30$   &$63$   &$49$   &$212$  &$180$  &$159$  &$109$  &$258$  \\\hline
\end{tabular}
\end{center}
\end{example}

\begin{example}
\label{ex:klein}
 Consider the Klein quartic $\cK$ over $\F_2$, which can be represented by the model
  \[ \cK: (X^2 + XZ)^2 + (X^2 + XZ)(Y^2 + YZ) + (Y^2 + YZ)^2 + Z^4 = 0. \]
 This model is equation $(1.22)$ in \cite{Elk99} reduced modulo $2$.
 It is known, see for example \cite[Section 3.3]{Elk99}, that $\cK$ is isogenous to $E^3$, where $E$ is an elliptic curve in the isogeny class determined by the Weil polynomial
 \[ h=x^2-x+2. \]
 Note that $\Z[x]/h$ is the maximal order of $K=\Q[x]/h$ and that $K$ has class number one.
 We deduce that $E$ is \emph{super-isolated}, that is, its isogeny class consists of only one isomorphism class.
 From Theorem \ref{thm:eqideals} we deduce that also $E^3$ is super-isolated and we conclude that the Jacobian $J(\cK)$ is isomorphic to $E^3$ (as an unpolarized abelian variety).
 Explicitly, $E$ is given by
 \[ E:Y^2 + (X + 1)Y = X^3 + 1. \]
 The fact that $J(\cK)$ is isomorphic to the cube of an elliptic curve could also be deduced, with some work, from the discussion on \cite[pp.~414-415]{Hoffmann91}.
 Clearly, such isomorphism is not compatible with the canonical principal polarization of $J(\cK)$, which is indecomposable, and the product principal polarization of $E^3$. 
\end{example}

\begin{example}
\label{ex:freq}
	In this example we are able to list all principally polarized abelian varieties in a particular isogeny class of surfaces together with their automorphisms.
	Consider the isogeny class of elliptic curves over $\F_3$ determined by the Weil polynomial 
	\[h=x^2+2x+3.\]
	The number field $K=\Q[x]/h$ has class number one and the order $\Z[x]/h$ is maximal.
	We deduce that the isogeny class $\AV(h)$ is super-isolated.
	We now consider the isogeny class $\AV(h^2)$ which is also super-isolated by Theorem \ref{thm:eqideals}. 
	From \cite[Table 3]{MaNart02} we find that $\AV(h^2)$ contains the Jacobian of the hyperelliptic curve 
	\[\cH: Y^2=X^6+X^4+X^2+1.\]
	From data on curves of genus at most three computed by Jonas Bergstr\"om
	in connection with the article \cite{BergFabervdG14} we can deduce that there are only two isomorphism classes of principally polarized abelian varieties in $\AV(h^2)$.
	The two isomorphism classes are represented by the Jacobian $J(\cH)$
	which has $48$ (polarized) automorphisms and the square of the elliptic curve
	\[ E:Y^2 = X^3 + X^2 + 2\]
	together with the product principal polarization, which has exactly $8$ polarized automorphisms (the canonical involutions on both factors and the involution coming from swapping the factors).
\end{example}


\newcommand{\etalchar}[1]{$^{#1}$}
\def\cprime{$'$}
\providecommand{\bysame}{\leavevmode\hbox to3em{\hrulefill}\thinspace}
\providecommand{\MR}{\relax\ifhmode\unskip\space\fi MR }
\providecommand{\MRhref}[2]{%
  \href{http://www.ams.org/mathscinet-getitem?mr=#1}{#2}
}
\providecommand{\href}[2]{#2}

\end{document}